\newtheorem{theorem}{Theorem}
\newtheorem{definition}{Definition}
\newtheorem{lemma}{Lemma}
\newtheorem{remark}{Remark}
\def\BibTeX{{\rm B\kern-.05em{\sc i\kern-.025em b}\kern-.08em
    T\kern-.1667em\lower.7ex\hbox{E}\kern-.125emX}}
\begin{document}
\title{Fourier--Hermite Dynamic Programming for Optimal Control}

\author{Syeda Sakira Hassan, Simo Särkkä \IEEEmembership{Senior Member, IEEE}
}

\maketitle

\begin{abstract}
In this paper, we propose a novel computational method for solving non-linear optimal control problems. The method is based on the use of Fourier--Hermite series for approximating the action-value function arising in dynamic programming instead of the conventional Taylor-series expansion used in differential dynamic programming (DDP). The coefficients of the Fourier--Hermite series can be numerically computed by using sigma-point methods, which leads to a novel class of sigma-point based dynamic programming methods. We also prove the quadratic convergence of the method and experimentally test its performance against other methods.
\end{abstract}

\begin{IEEEkeywords}
differential dynamic programming, approximate dynamic programming, Fourier--Hermite series, trajectory optimization, sigma-point dynamic programming
\end{IEEEkeywords}

\section{Introduction}
\label{sec:introduction}
\IEEEPARstart{T}{rajectory} optimization in nonlinear systems is an active research area in optimal control and reinforcement learning~\cite{sutton2018reinforcement, bertsekas2000dynamic, bertsekas2011dynamic}. The aim is to find a state-control sequence that globally or locally minimizes a given performance index such as a cost or a reward function. Applications include trajectory planning in autonomous vehicles, robotics, industrial automation, and gaming \cite{lavalle2006planning, camacho2013model, abbeel2007application, biggs2009optimal, zhao2020pareto, busoniu2017reinforcement}. 

A commonly used approach for solving trajectory optimization problems is dynamic programming (DP) \cite{bellman1957dynamic,bertsekas2000dynamic} which is based on solving the value function from the Bellman's equation~\cite{bellman1957dynamic} by using suitable numerical methods. One such particular approach is differential dynamic programming (DDP) \cite{mayne1966second,jacobson1970differential,mayne1973differential}, where a locally optimal solution is reached iteratively by backward and forward passes. The method is based on the second-order Taylor series expansion of the action-value function that appears in the Bellman's equation of dynamic programming. The convergence of DDP has also been proven under suitable differentiability conditions~\cite{murray1984differential,liao1990proof,liao1991convergence}.

Although DDP has turned out to be useful in many applications, the second-order Taylor series expansion used in this method is computationally expensive due to the higher order derivatives appearing in the expansion. Therefore, researchers have opted to discard the second-order derivatives which has led to methods such as the iterative linear quadratic regulator (iLQR) \cite{li2004iterative}. Furthermore, Taylor series expansion is also an inherently local approximation as it is based on derivatives evaluated at a single point and it induces strong differentiability assumptions on the dynamic and cost functions~\cite{murray1984differential,liao1990proof,liao1991convergence}. To address these limitations, the Taylor series expansion can also be replaced with other approximations.  Examples of such methods are the unscented DP \cite{manchester2016derivative}, sparse Gauss--Hermite quadrature DDP \cite{he2019computational}, and sampled DDP \cite{rajamaki2016sampled}.

In particular, unscented DP \cite{manchester2016derivative} uses an unscented transform based method, inspired by the unscented Kalman filter \cite{Julier+Uhlmann+Durrant-Whyte:2000,sarkka2013bayesian}, to estimate the derivatives using a sigma-point scheme. This allows the DP algorithm to be derivative-free while leveraging information beyond a single point of evaluation and without compromising the performance of the classical DDP algorithm. Additionally, cubature approximations of stochastic continuous-time DDP are considered in \cite{tassa2011high} and probabilistic approximations based on Gaussian processes are considered in~\cite{pan2014probabilistic}.

The contribution of this paper is to propose a method based on Fourier--Hermite series (cf.\ \cite{sarmavuori2011fourier}) to approximate the action-value function. The resulting Fourier--Hermite dynamic programming (FHDP) algorithm can be implemented using sigma-point methods in a completely derivative-free manner, which leads to a new class of sigma-point dynamic programming (SPDP) methods. We also prove the local second-order convergence of the method and experimentally evaluate its performance against classical DDP and unscented DP. Unlike unscented DP or sparse Gauss--Hermite DDP, the method is guaranteed to converge in well-defined conditions, and it can also explicitly handle non-quadratic costs. Moreover, unscented DP requires the propagation of estimates in backward direction along the trajectory, which is not needed in our method. 

The paper is structured as follows. In Sec.~\ref{sec:background}, we revisit the DDP in discrete-time domain. In Sec.~\ref{sec:fh}, we first discuss Fourier--Hermite series and then use the Fourier--Hermite expansion to approximate the action-value function, leading to the proposed method. In Sec.~\ref{sec:theory}, we analyze the computational complexity and prove the local convergence of the method, and in Sec.~\ref{sec:experiments} we experimentally evaluate its performance. Concluding remarks are given in Sec.~\ref{sec:conclusion}.

\section{Differential Dynamic Programming}
\label{sec:background}
In this section, we define the control problem to be solved and review the differential dynamic programming (DDP) algorithm. 

\subsection{Problem formulation}
\label{ssec:problem_formulation}

Consider a nonlinear discrete-time deterministic optimal control problem~\cite{bertsekas2000dynamic} with cost
\begin{equation}
    J(u_{1:T-1}; x_1) =  \ell_T(x_T) + \sum_{k=1}^{T-1} \ell_k(x_k, u_k)
    \label{eq:total_cost}
    \end{equation}
with given initial state $x_1$, subject to the dynamics of the form
\begin{equation}
    x_{k+1} = f_k(x_k, u_k), \quad k=1, \cdots, T-1.
\label{eq:dynamics}
\end{equation}
Here, $x_k \in \mathbb{R}^{n}$ is the state variable, $u_k \in \mathbb{R}^{s}$ is the control variable at step $k$, and $u_{1:T-1} = \{u_1, \ldots, u_{T-1}\}$ is a sequence of controls over the horizon $T$. For a given initial state $x_1$, the total cost of the control sequence $u_{1:T-1}$ is given by~\eqref{eq:total_cost}. Furthermore, $\ell_T(x_T)$ denotes the terminal cost of the state $x_T$ and the $\ell_k(x_k, u_k)$ is the cost incurred at time step $k$.

The aim is to find a control sequence $u^*_{1:T-1}$ that minimizes the cost defined by~\eqref{eq:total_cost}:
\begin{equation}
  u^*_{1:T-1} = \arg \min_{u_{1:T-1}} J(u_{1:T-1}; \, x_1).
\end{equation}
This solution can be expressed in terms of the optimal cost-to-go or value function $V_k(x_k)$ that gives the minimum total cost accumulated between time step $k$ and $T$ starting from the state $x_k$. As shown by Bellman~\cite{bellman1957dynamic}, we can compute the value function using backward recursion as follows:
\begin{equation}
    \begin{split}
    V_k(x_k) &= \min_{u_k} \left\{ \ell_k(x_k, u_k) + V_{k+1}(f_k(x_{k}, u_k)) \right\},
    \end{split}
    \label{eq:bellman}
\end{equation}
where the value function at the terminal time $T$ is $V_{T}(x_T) = \ell_T(x_T)$.

Because \eqref{eq:bellman} becomes computationally infeasible with increasing state-dimensionality \cite{bellman1959functional}, one common approach is to approximate the action-value function appearing on the right hand side of \eqref{eq:bellman}:
\begin{equation}
Q_k(x_k, u_k) = \ell_k(x_k, u_k) + V_{k+1}(f_k(x_{k}, u_k)),
\label{eq:q_function}
\end{equation}
using a tractable approximation. In particular, differential dynamic programming (DDP), which is discussed below, uses a second order Taylor series expansion for this purpose.

\subsection{Differential dynamic programming}
\label{ssec:deterministic_ddp}
The classical DDP~\cite{mayne1966second, jacobson1970differential, mayne1973differential} approach uses a second-order Taylor series expansion of the action-value function $Q_k$ about a nominal trajectory. Given a nominal trajectory of states and controls $(\hat{x}_k, \hat{u}_k)$, at step $k$, we can approximate $Q_k$ around this trajectory using a second-order Taylor series expansion 
\begin{equation}
\begin{split}
    Q_k( x_k, u_k) &\approx Q_k^0 + Q_x^\top \, \delta x_k + Q_u^\top \, \delta u_k \\
    \quad& + \frac{1}{2} \begin{bmatrix}
      \delta x_k^\top &   \delta u_k^\top
    \end{bmatrix}  \begin{bmatrix}
       Q_{xx} & Q_{xu}  \\
      Q_{ux} & Q_{uu}
    \end{bmatrix} \begin{bmatrix}
      \delta x_k \\
      \delta u_k
    \end{bmatrix},
\end{split}
\label{eq:approx_q_func}
\end{equation}
where $\delta x_k = x_k - \hat{x}_k$ and $\delta u_k = u_k - \hat{u}_k$. Let us now assume a quadratic approximation for the value function of the form
\begin{equation}
    \begin{split}
  V_{k+1}(f_{k}(x_k, u_k)) &\approx V_{k+1}^0 -  v_{k+1}^\top \, \delta x_{k+1} +  \frac{1}{2} \, \delta x_{k+1}^\top \, S_{k+1} \, \delta x_{k+1}.
  \end{split}
    \label{eq:approx_value_function}
\end{equation}
If we now form a second-order Taylor series expansion of \eqref{eq:q_function}, we get the following coefficients for \eqref{eq:approx_q_func}:
\begin{equation}
    \begin{split}
    Q_k^0 &\approx \ell_k(\hat{x}_k, \hat{u}_k)  + V_{k+1}^0 - v_{k+1}^\top \, (f_k(\hat{x}_k, \hat{u}_k) - \hat{x}_{k+1}) \\
    &+ \frac{1}{2} (f_k(\hat{x}_k, \hat{u}_k) - \hat{x}_{k+1})^\top \, S_{k+1} \, (f_k(\hat{x}_k, \hat{u}_k) - \hat{x}_{k+1}), \\
    Q_x &= L_x + F_x^\top [- v_{k+1} + S_{k+1}\, (f_k(\hat{x}_k, \hat{u}_k ) - \hat{x}_{k+1})],  \\
    Q_u &= L_u + F_u^\top [- v_{k+1} + S_{k+1} \,(f_k(\hat{x}_k, \hat{u}_k ) - \hat{x}_{k+1})],  \\
  Q_{xx} &= L_{xx} + F_x^\top \, S_{k+1} \, F_x, \\
  &\quad+ \sum_m F^m_{xx} [- v_{k+1} + S_{k+1} \, (f_k(\hat{x}_k, \hat{u}_k ) - \hat{x}_{k+1})]_m,  \\
  Q_{xu} &= L_{xu} + F_x^\top\, S_{k+1} \, F_u, \\
  &\quad+ \sum_m F^m_{xu} [- v_{k+1} + S_{k+1} \, (f_k(\hat{x}_k, \hat{u}_k ) - \hat{x}_{k+1})]_m, \\
  Q_{uu} &= L_{uu} + F_u^\top \, S_{k+1} \, F_u, \\
  &\quad+ \sum_m F^m_{uu} [- v_{k+1} + S_{k+1} \, (f_k(\hat{x}_k, \hat{u}_k ) - \hat{x}_{k+1})]_m. \\
  \label{eq:coefficients_of_q}
    \end{split}
\end{equation}
Above we have denoted the gradients of $\ell_k$ with respect to $x$ and $u$ as $L_x$ and $L_u$. The second-order derivative matrices of $\ell_k$ are denoted as $L_{xx}$, $L_{xu}$, and $L_{uu}$. The Jacobians of $f_k$ with respect to $x$ and $u$ are denoted as $F_x$ and $F_u$. In addition, we use $F_{xx}^m$, $F_{xu}^m$, and $F_{uu}^m$ to denote the second-order derivative matrices of the $m^{th}$ component of $f_k$. All the derivatives are evaluated at $(\hat{x}_k, \hat{u}_k)$.

Minimizing \eqref{eq:approx_q_func} with respect to $\delta u_k$, we arrive at the following correction to the control trajectory
\begin{equation}
    \begin{split}
        \delta u_k &= - \, Q_{uu}^{-1} \, Q_u  - Q_{uu}^{-1} \, Q_{ux} \, \delta x_k.\\
    \end{split}
\label{eq:optimal_control_law}
\end{equation}
Let us define
\begin{equation}
  \begin{split}
  d &= - Q_{uu}^{-1} Q_u, \quad K =  Q_{uu}^{-1} Q_{ux},
  \end{split}
\label{eq:d_and_K}
\end{equation}
then we can rewrite \eqref{eq:optimal_control_law} as follows:
\begin{equation}
    \begin{split}
        \delta u_k &= d - K  \delta x_k.
    \end{split}
\label{eq:optimal_control_law_modified}
\end{equation}
By substituting $\delta u_k$ to~\eqref{eq:approx_q_func} we get the coefficients for the quadratic approximation of the value function at step $k$:
\begin{equation}
    \begin{split}
        V_k^0 &= Q_k^0 + \frac{1}{2} d^\top Q_{u}, \\
        v_k &= - \, Q_x - K^\top Q_{uu} \, d  ,  \\
        S_k &= Q_{xx} - K^\top Q_{uu} \, K.
    \end{split}
    \label{eq:value_coefficient_and_control_variation}
\end{equation}
This procedure is then continued backwards for $k-1,k-2,\ldots,1$. 

That is, the backward pass of DDP starts from the terminal time step $k = T$ from a quadratic approximation to $\ell_T$ formed with a second-order Taylor series expansion centered at $\hat{x}_T$. Then, we successively perform the aforementioned computations until $k=1$. The backward pass is followed by a forward pass, where the system is simulated forward in time under the optimal control law \eqref{eq:optimal_control_law_modified} to generate a new trajectory. The backward and forward passes are iterated until convergence. The pseudocode for the classical DDP method is given in Algorithm~\ref{alg:ddp_algorithm}.

\begin{algorithm}[tbh]
\caption{Differential dynamic programming}
\label{alg:ddp_algorithm}
\begin{algorithmic}[1]
\renewcommand{\algorithmicrequire}{\textbf{Input:}}
\renewcommand{\algorithmicensure}{\textbf{Output:}}
\REQUIRE Initial state $\hat{x}_1$, nominal control $\hat{u}_k$ for $k=1,\ldots,T-1$, and nominal states $\hat{x}_k$ for $k=2,\ldots,T$
\ENSURE Updated $\hat{u}_k$ and $\hat{x}_k$
\STATE \textbf{Backward pass:}
\STATE {Compute terminal cost, $\ell_T(\hat{x}_T)$ and its derivatives $L_{x}(\hat{x}_T)$ and $L_{xx}(\hat{x}_T)$}
\STATE {$V_T^0 \gets \ell_T(\hat{x}_T) $, $v_T \gets -L_x(\hat{x}_T)$, and $S_{T} \gets L_{xx}(\hat{x}_T) $}
\FOR {$k = T-1$ to $1$}
\STATE {Evaluate the partial derivatives $L_x$, $L_u$, $L_{xx}$, $L_{uu}$, $L_{xu}$ of $\ell_k$ and $F_x$, $F_u$, $F_{xx}$, $F_{uu}$, $F_{xu}$ of $f_k$} at $(\hat{x}_k,\hat{u}_k)$.
\STATE {Compute the coefficients of $Q_k(x_k, u_k)$ using~\eqref{eq:coefficients_of_q}.}
\STATE {Compute $d$ and $K$ using \eqref{eq:d_and_K}, and $v_k$ and $S_k$ using~\eqref{eq:value_coefficient_and_control_variation}.} 
\ENDFOR
\STATE \textbf{Forward pass: }
\STATE {Start from $\hat{x}_1$}
\FOR{$k = 1$ to $T-1$}
\STATE {$\hat{u}_k \gets u_k + \delta u_k$, where $\delta u_k$ is given by \eqref{eq:optimal_control_law_modified}.}
\STATE {$\hat{x}_{k+1} \gets f_k(\hat{x}_k, \hat{u}_k)$}
\ENDFOR
\STATE Repeat from Step 1 until convergence. 
\end{algorithmic}
\end{algorithm}

\subsection{Regularization of the optimization and line search} 
\label{sec:reg}
As with all nonlinear optimization, proper care must be taken to ensure a good convergence behavior of the method. The DDP algorithm involves the matrix inversion of $Q_{uu}$ in~\eqref{eq:optimal_control_law}, which may cause numerical instability. A regularization scheme was, therefore, proposed by~\cite{jacobson1970differential,liao1991convergence,tassa2011theory} to ensure invertibility of $Q_{uu}$ in~\eqref{eq:optimal_control_law} by replacing it with: 
\begin{equation}
    \begin{split}
        \tilde{Q}_{uu} &= Q_{uu} + \beta I,
    \end{split}
    \label{eq:regularized_control_coef}
\end{equation}
where, $\beta > 0$ is a small positive constant. Furthermore, when using this regularization, \cite{tassa2011theory} suggests that the following modifications to \eqref{eq:value_coefficient_and_control_variation} are recommended for numerical stability:
\begin{equation}
    \begin{split}
        V_k^0 &= Q_k^0 + \frac{1}{2} \, \tilde{d}^\top \, Q_{u} + \frac{1}{2} \, \tilde{d}^\top \, Q_{uu} \tilde{d},  \\
        v_k &= - \, Q_x + \tilde{K}^\top \, Q_{uu} \, \tilde{d}  + \tilde{K}^\top \, Q_{u}  - Q_{ux}^\top \, \tilde{d},  \\
        S_k &= Q_{xx} + \tilde{K}^\top Q_{uu} \, \tilde{K} - \tilde{K}^\top \, Q_{ux} - Q_{ux}^\top \, \tilde{K},
    \end{split}
    \label{eq:modified_value_coefficient_and_control_variation}
\end{equation}
where $\tilde{K} = \tilde{Q}_{uu}^{-1} Q_{ux}$ and $\tilde{d} = - \, \tilde{Q}_{uu}^{-1} Q_{u}$.

The value $\beta$ can be adapted by using a Levenberg--Marquardt type of adaptation procedure, that is, if the cost of the new trajectory is less than the current one, then the value of $\beta$ is decreased by dividing it with a constant factor $\nu > 1$, otherwise the value increased by multiplying with $\nu$ and the new trajectory is discarded. Note that if $\beta = 0$, then~\eqref{eq:modified_value_coefficient_and_control_variation} reduces to \eqref{eq:value_coefficient_and_control_variation}.  In~\cite{tassa2012synthesis} the authors used a quadratic modification schedule to choose $\beta$ at each iteration. 

References \cite{yakowitz1984computational,liao1990proof,tassa2011theory} also suggest to use an additional backtracking line search scheme to improve convergence. A parameter $0 < \epsilon \leq 1$ is introduced in the update statement of the control in the forward pass routine as follows
\begin{equation}
\begin{split}
    \delta \hat{u}_k = \epsilon \, \tilde{d} - \tilde{K}\,\delta x_k, \quad \hat{u}_k = u_k + \delta \hat{u}_k.
\end{split}
\label{eq:forward_pass_backtracking}
\end{equation}
We start by setting $\epsilon = 1$ and update control using~\eqref{eq:forward_pass_backtracking} and then generate a new state sequence by forward simulation, that is, $\hat{x}_{k+1} = f(\hat{x}_k, \hat{u}_k)$. If the decrease in the cost function is not below a given threshold, the value of $\epsilon$ is decreased (e.g. by halving it as we did in our case), and we restart the forward pass again. 

\subsection{Implementation of DDP using automatic differentiation}
\label{sec:autodiff}
During the backward pass in DDP, we need to evaluate the derivatives on the right-hand side of~\eqref{eq:coefficients_of_q} at $(\hat{x}_k, \hat{u}_k)$ to approximate $Q_k$. Classically these derivatives have been derived by hand or via symbolic or numerical differentiation methods, but they can also be automatically computed by using automatic differentiation (AD) \cite{griewank2008evaluating}. AD is based on transforming the function to be evaluated into a sequence of operations that compute the exact derivatives of the function along with its value. AD is readily available in several programming platforms such as TensorFlow~\cite{abadi2016tensorflow}, PyTorch~\cite{paszke2017automatic}, and MATLAB~\cite{matlabautodiff}.

When using automatic differentiation, there are two alternative ways to evaluate $Q_k$ of the DDP algorithm at the nominal trajectory $(\hat{x}_k, \hat{u}_k)$. The first approach evaluates the derivatives of $\ell_k$ and $f_k$ on the right-hand side of~\eqref{eq:coefficients_of_q} at $(\hat{x}_k, \hat{u}_k)$ using AD. Once we have all the derivatives,  we can solve for $Q_k$ using~\eqref{eq:coefficients_of_q}. The other alternative is to use AD directly to $Q_k$ and evaluate its derivatives at $(\hat{x}_k, \hat{u}_k)$. In this paper, we implement DDP with AD by applying the former approach due to its more direct connection with the classical DDP.

\section{Fourier--Hermite Dynamic Programming} 
\label{sec:fh}
In this section, we present the proposed method which is based on using the Fourier--Hermite series approximation instead of the Taylor series approximation for the action-value function.

\subsection{Fourier--Hermite series}

Fourier--Hermite series is a series expansion of a function using Hermite polynomial basis of a Hilbert space~\cite{malliavin2015stochastic}. The $m$-th order univariate Hermite polynomials can be computed as follows:
\begin{equation}
    H_m(x) = (-1)^m \exp{(x^2/2)} \frac{d^m}{d x^m} \exp{(-x^2/2)}, \quad m = 0, 1, \ldots.
\end{equation}
The first few ($m=\{0, 1, 2\}$) Hermite polynomials are $H_0(x) = 1$, $H_1(x) = x$, $H_2(x) = x^2 - 1$, and for $m\geq3$ polynomials can be found using the recursion $H_{m+1}(x) = x \, H_{m}(x) - m \, H_{m-1}(x)$.

A multivariate Hermite polynomial with multi-index $\mathcal{I} = \{i_1, \ldots, i_n\}$ for $n$-dimensional vector $x$ can be written as
\begin{equation}
    H_\mathcal{I}(x) = \prod_{m=1}^n H_{i_m}(x_m),
    \label{eq:mth_hermite}
\end{equation}
where $H_{i_m}(x_m)$ are univariate Hermite polynomials. Let us define the inner product of two functions $f$ and $g$ as
\begin{equation}
\begin{split}
\langle f, g \rangle &=  \int_{\mathbb{R}^n} f(x) \, g(x) \, \mathcal{N}(x\mid 0, I) \, d x,
\end{split}
\end{equation}
and a Hilbert space $\mathcal{H}$ consisting of functions satisfying $\| g \|^2 = \langle g, g \rangle < \infty$. Then the Hermite polynomials are orthogonal in the sense
\begin{equation}
\begin{split}
\langle H_\mathcal{I}, H_\mathcal{J} \rangle &=  \int H_\mathcal{I}(x) \, H_\mathcal{J}(x)  \, \mathcal{N}(x\mid 0, I) \, dx
=   \begin{cases}
        \mathcal{I}!, & \text{if } \mathcal{I} = \mathcal{J},  \\    
        0, & \text{otherwise}.    
    \end{cases}
\end{split}
\label{eq:multivariate_orthogonality}
\end{equation}
Here, $\mathcal{I}! = i_1! \, i_2! \, \cdots \, i_n!$, 
$\mathcal{J} = \{ j_1, j_2,\cdots, j_n \}$ and $\mathcal{I} = \mathcal{J}$ when $i_k = j_k$ for all elements in $\mathcal{I}, \mathcal{J}$. Now, we can define the Fourier--Hermite expansion of a function $g(x)$ as follows
\begin{definition}
For any $g \in \mathcal{H}$, the Fourier--Hermite expansion of $g$ with respect to a unit Gaussian distribution $\mathcal{N}(0,I)$ is given by
\begin{equation}
    g(x) = \sum_{k=0}^\infty \sum_{|\mathcal{I}|=k}
    \frac{1}{\mathcal{I}!} \, {c}_{\mathcal{I}} \, H_{\mathcal{I}}(x),
\label{eq:fourier_hermite_series}
\end{equation}
where, $H_{\mathcal{I}}$ is a multivariate Hermite polynomial and ${c}_{\mathcal{I}}$ are the series coefficients given by the inner product ${c}_{\mathcal{I}} = \langle g, H_{\mathcal{I}} \rangle$.
\end{definition}

The representation in~\eqref{eq:fourier_hermite_series} is useful if we want to compute expectations of a nonlinear function over a unit Gaussian distribution. It turns out that the expectation of the function can be simply extracted from the zeroth order coefficient $c_0$ of the Fourier--Hermite series and the higher order coefficients are equal to the expectations of the derivatives of the function $g(x)$ \cite{sarmavuori2011fourier}. In this paper, we are particularly interested in the second-order Fourier--Hermite series expansion of $g(x)$ which can be written as
\begin{equation}
\begin{split}
g(x) &\approx \sum_{k=0}^2  \sum_{|\mathcal{I}|=k} \frac{1}{\mathcal{I}!} \, {c}_{\mathcal{I}} \, H_{\mathcal{I}}(x)
= \mathbb{E} \, [g(x)] + \mathbb{E} \, [g(x) \, H_1(x)]^\top \, H_1(x) \\
&\qquad + \frac{1}{2} \, \mathop{\text{tr}}\left\{\mathbb{E} \,[g(x) \, H_2(x)] \, H_2(x)\right\}.
\end{split}
\label{eq:fh_second_order}
\end{equation}
In \eqref{eq:fh_second_order}, the multivariate polynomials $H_i(x)$ have been expressed as vectors and matrices as follows (cf. \cite{sarmavuori2011fourier}): 
\begin{equation}
    \begin{split}
        H_0(x) = 1,\quad
        H_1(x) = x,\quad
        H_2(x) = x \, x^\top - I.
    \end{split}
    \label{eq:tensors}
\end{equation}
We can also generalize the expansion to a more general Gaussian distribution $\mathcal{N}(\mu, \Sigma)$ by rewriting the second order Fourier--Hermite expansion as
\begin{equation}
\begin{split}
g(\Lambda \, x + \mu) &\approx \mathbb{E} \, [g(\Lambda \, x + \mu)] + \mathbb{E} \, [g(\Lambda \, x + \mu) \, H_1(x)]^\top \, H_1(x) \\
&\quad + \frac{1}{2} \, \mathop{\text{tr}} \, \left\{\mathbb{E} \,[g(\Lambda \, x + \mu) \, H_2(x)] \, H_2(x)\right\}.
\end{split}
\label{eq:fh_general_second_order}
\end{equation}
Above, we have put $\Sigma = \Lambda \, \Lambda^\top$. If we now let $y = \Lambda \, x + \mu$, then \eqref{eq:fh_general_second_order} becomes
\begin{equation}
\begin{split}
g(y) &\approx \mathbb{E} \, [g(y)] + \mathbb{E} \, [g(y) \, H_1 \,(\Lambda^{-1} \, (y - \mu))]^\top \, H_1 \,(\Lambda^{-1} \, (y - \mu)) \\
&\quad + \frac{1}{2} \, \mathop{\text{tr}} \, \left\{\mathbb{E} \, [g(y) \, H_2 \, (\Lambda^{-1} \, (y - \mu))] \, H_2 \,(\Lambda^{-1} \, (y - \mu))\right\},
\end{split}
\label{eq:fh_general_second_order_y}
\end{equation}
where the expectations are over $y \sim \mathcal{N}(\mu,\Sigma)$. Now, if we substitute the multivariate Hermite polynomials from~\eqref{eq:tensors} to~\eqref{eq:fh_general_second_order_y}, we get
\begin{equation}
\begin{split}
g(y) &\approx \mathbb{E} \, [g(y)] + \mathbb{E} \, [g(y) \, H_1 \, (\Lambda^{-1} \, (y - \mu))]^\top \, (\Lambda^{-1} \, (y - \mu)) \\
&\quad + \frac{1}{2} \mathop{\text{tr}}\left\{\mathbb{E} \, [g(y) \, H_2 \, (\Lambda^{-1} \, (y - \mu))] \right.\\
&\qquad \times \left.
(\Lambda^{-1}(y -\mu) \, (y - \mu)^\top \, \Lambda^{-\top} - I) \right\}.
\end{split}
\label{eq:fh_general_second_order_y2}
\end{equation}
Let us denote
\begin{equation}
    \begin{split}
        a_G &= \mathbb{E} \, [g(y)],\\
        b_G &= \mathbb{E} \, [g(y) \, H_1 \, (\Lambda^{-1} \, (y - \mu))],\\
        C_G &= \mathbb{E} \, [g(y) \, H_2 \, (\Lambda^{-1} \, (y - \mu))].
    \end{split}
    \label{eq:coef_def}
\end{equation}
Then, \eqref{eq:fh_general_second_order_y2} can be rewritten as
\begin{equation}
\begin{split}
g(y) &\approx a_G - \frac{1}{2} \, \mathop{\text{tr}} \, \{C_G\} + b_G^\top \, (\Lambda^{-1} \,(y - \mu))\\
&\quad + \frac{1}{2} \, (y - \mu)^\top \, [\Lambda \, C_G^{-1} \, \Lambda^{\top}]^{-1} \,(y - \mu).
\end{split}
\label{eq:fh_general_second_order_y3}
\end{equation}
Now, we are ready to apply this approximation to the action-value function.
\subsection{Fourier--Hermite approximation of action-value function}
Consider the approximation of the action-value function $Q_k$ defined in~\eqref{eq:q_function}. Furthermore, assume that we have a quadratic approximation to the value function at step $k+1$ of the form \eqref{eq:approx_value_function}. Instead of using a Taylor series approximation to get \eqref{eq:approx_q_func} as in \eqref{eq:coefficients_of_q}, we will now form the approximation with Fourier--Hermite series. Assume that our nominal trajectory for $k=1,\ldots,T-1$ consists of mean $\mu_k = [\hat{x}_k ,\, \hat{u}_k]$ and the joint covariance $\Sigma_k= \Lambda_k \, \Lambda_k^\top$ for the Gaussian distribution of the state-control pair $(x_k, u_k)$\footnote{With a slight abuse of the notation $[x_k ,\,  u_k]$ used here to represent a column vector and is equivalent to $\begin{bmatrix}
   x_k \\ u_k
\end{bmatrix}$, not to be confused with $[ x_k^\top \, \,  u_k^\top]$, which represents a row vector.}. Further assume that at the terminal step $T$ the nominal trajectory consists of $\hat{x}_T$ and $\Sigma_T$. If we let $\delta x_k = x_k - \hat{x}_k$ and $\delta u_k = u_k - \hat{u}_k$, then the Fourier--Hermite approximation for $Q_k$ can be written as
\begin{equation}
    \begin{split}
        Q_k(x_k, u_k) &\approx a_Q - \frac{1}{2} \, \mathop{\text{tr}} \{C_Q\} + b_Q^\top \, \Lambda_k^{-1} \begin{bmatrix} 
        \delta x_k \\ \delta u_k
        \end{bmatrix} \\
        &\quad +\frac{1}{2} \, \begin{bmatrix} 
        \delta x_k^\top  & \delta u_k^\top
        \end{bmatrix} \left[\Lambda_k \, C_Q^{-1} \, \Lambda_k\right]^{-1} \begin{bmatrix} 
        \delta x_k \\ \delta u_k
        \end{bmatrix},
    \end{split}
    \label{eq:fh_approx_q_func}
\end{equation}
where
\begin{equation}
    \begin{split}
        a_Q &= \mathbb{E} \, \left[Q_k(x_k, u_k)\right], \\
        b_Q &= \mathbb{E} \, \left[Q_k(x_k, u_k) \, H_1 \, (\Lambda_k^{-1}\begin{bmatrix}\delta x_k , \delta u_k
        \end{bmatrix}) \right], \\
        C_Q &= \mathbb{E} \, \left[Q_k(x_k, u_k) \, H_2 \, (\Lambda_k^{-1} \, \begin{bmatrix}\delta x_k , \delta u_k
        \end{bmatrix}) \right],
    \end{split}
    \label{eq:aQbQCQ_det}
\end{equation}
with the expectations taken over the joint Gaussian distribution for $(x_k,u_k)$. The expectations can be numerically computed, for example, using numerical integration methods such as sigma-point methods \cite{sarkka2013bayesian}. By matching the terms in \eqref{eq:approx_q_func} and \eqref{eq:fh_approx_q_func} we now notice that this approximation has the same form as DDP with the correspondences
\begin{equation}
    \begin{split}
        Q_k^0 &= a_Q - \frac{1}{2}\mathop{\text{tr}} \{C_Q\},\\
        \begin{bmatrix}
          Q_x^\top \quad Q_u^\top
        \end{bmatrix} &= b_Q^\top \, \Lambda_k^{-1},\\
        \begin{bmatrix}
            Q_{xx} & Q_{xu}\\
            Q_{ux} & Q_{uu}
        \end{bmatrix} &= \left[\Lambda_k \, C_Q^{-1} \, \Lambda_k\right]^{-1}.
    \end{split}
\label{eq:fh_coef}
\end{equation}

At the terminal step $T$, the nominal trajectory consists of mean $\hat{x}_T$ and covariance $\Sigma_T = \Lambda_T \Lambda_T^\top$. The approximation is formed as
\begin{equation}
    \begin{split}
  V_T(x_T) &\approx V_T^0 -  v_T^\top \, \delta x_T  +  \frac{1}{2} \, \delta x_T^\top \, S_T \, \delta x_T,
  \end{split}
\end{equation}
where $\delta x_T = x_T - \hat{x}_T$,  and 
\begin{equation}
\begin{split}
  V_T^0 &= a_T - \frac{1}{2} \mathrm{tr}\left\{ C_T \right\}, \\
      v_T^\top
    &= -\, b_T^\top \, \Lambda_T^{-1}, \\
      S_T
  &= [\Lambda_T \, C_T^{-1} \, \Lambda_T^\top]^{-1},
\end{split}
\label{eq:fh_coef_T}
\end{equation}
with
\begin{equation}
\begin{split}
  a_T &= \mathbb{E}\,[\ell_T(x_T)], \\
  b_T &= \mathbb{E}\,[\ell_T(x_T) \, H_1(\Lambda_T^{-1} \, \delta x_T)], \\
  C_T &= \mathbb{E}\,[\ell_T(x_T) \, H_2(\Lambda_T^{-1} \, \delta x_T)].
\end{split}
\label{eq:aTbTCT_det}
\end{equation}
The Fourier--Hermite dynamic programming (FHDP) algorithm in its abstract form now consists in replacing the Taylor series based computations of the action-value function coefficients in \eqref{eq:approx_q_func} with \eqref{eq:fh_coef} and the terminal step value function approximation by \eqref{eq:fh_coef_T}. It is worth noticing that the Hermite polynomials needed at the terminal step in \eqref{eq:aTbTCT_det} are functions of $n$-dimensional input although in \eqref{eq:aQbQCQ_det} the input dimension is $s + n$.

\subsection{Coefficient computation via sigma-point methods} \label{sec:sigma-points}
Sigma-point methods are numerical integration methods commonly used in non-linear filters, such as unscented Kalman filters (UKFs), cubature Kalman filters (CKFs), Gauss--Hermite Kalman filters (GHKFs), and their extensions \cite{sarkka2013bayesian}. In their most common form, sigma-point method can be seen as Gaussian quadrature approximations for computing Gaussian integrals as follows:
\begin{equation}
\begin{split}
  \int g(x) \, \mathcal{N}(x \mid 0,I) \, dx \approx \sum_{i} W_i^n \, g(\xi_i^n),
\end{split}
\end{equation}
where $x \in \mathbb{R}^n$, and the weights $W_i^n$ and (unit) sigma points $\xi_i^n$ for the $n$-dimensional integration rule are determined by the sigma-point method at hand. By a change of variables, $y = \Lambda \, x + \mu$ we can then approximate integrals over more general Gaussian distributions $\mathcal{N}(y \mid \mu, \Sigma)$ as
\begin{equation}
\begin{split}
  \int g(y) \, \mathcal{N}(y \mid \mu, \Sigma) \, dx \approx \sum_{i} W_i^n \, g(\Lambda \, \xi_i + \mu),
\end{split}
\end{equation}
where, $\Sigma = \Lambda \, \Lambda^\top$.
We can now apply this rule to \eqref{eq:aQbQCQ_det}, which gives the sigma-point approximations: 
\begin{equation}
\begin{split}
  a_Q &\approx \sum_i W_i^{n+s} \, Q_k(\Lambda_k \, \xi_i^{n+s} + [\hat{x}_k ,\, \hat{u}_k]), \\
  b_Q &\approx \sum_i W_i^{n+s} \, Q_k(\Lambda_k \, \xi_i^{n+s} + [\hat{x}_k ,\, \hat{u}_k]) \, \xi_i^{n+s}, \\
  C_Q &\approx \sum_i W_i^{n+s} \, Q_k(\Lambda_k \, \xi_i^{n+s} + [\hat{x}_k ,\, \hat{u}_k])
    \, (\xi_i^{n+s} \, [\xi_i^{n+s}]^\top - I),
\end{split}
\label{eq:aQbQCQ_sddp}
\end{equation}
where $\Sigma_k = \Lambda_k \, \Lambda_k^\top$ is the joint covariance  of the nominal trajectory $(x_k, u_k)$. It is though important to note that it is not sufficient to use a third order rule such as unscented transform or 3rd order cubature rule, because the resulting integrals are typically higher order polynomials. Instead, it is advisable to use, for example, Gauss--Hermite rules~\cite{ito2000gaussian, haykin2009cubature} or higher order spherical cubature (i.e., unscented) rules~\cite{sarkka2015relation, sarkka2014gaussian, wu2006numerical}. 

The expectations at the terminal step \eqref{eq:aTbTCT_det} can be computed as
\begin{equation}
\begin{split}
  a_T &\approx \sum_i W_i^n \, \ell_T(\Lambda_T \, \xi_i^n + \hat{x}_T), \\
  b_T &\approx \sum_i W_i^n \, \ell_T(\Lambda_T \, \xi_i^n + \hat{x}_T) \, \xi_i^n, \\
  C_T &\approx \sum_i W_i^n \, \ell_T(\Lambda_T \, \xi_i^n + \hat{x}_T) \,
    (\xi_i^n \, [\xi_i^n]^\top - I).
\end{split}
\label{eq:aTbTCT_sddp}
\end{equation}

The sigma-point based FHDP is summarized in Algorithm~\ref{alg:spdp_algorithm}. Although the algorithm is written in its simple form, it is also possible to use the regularization and line search methods described in Section~\ref{sec:reg} as part of it. Although in the line search, a straightforward way is to use the original cost function as the merit function, in its implementation, it is important to take into account that the cost function minimized by the FHDP is not exactly the original cost function (see Sec.~\ref{sec:theory}). 

\begin{algorithm}[tbh]
\caption{Sigma-point dynamic programming (SPDP)}
\label{alg:spdp_algorithm}
\begin{algorithmic}[1]
\renewcommand{\algorithmicrequire}{\textbf{Input:}}
\renewcommand{\algorithmicensure}{\textbf{Output:}}
\REQUIRE Initial state $\hat{x}_1$, nominal control $\hat{u}_k$, for $k=1,\ldots,T-1$, nominal state $\hat{x}_k$, for $k=2,\ldots,T$, terminal covariance $\Sigma_{T}$, and joint covariance $\Sigma_{k}$
\ENSURE Update $\hat{u}_k$ and  $\hat{x}_k$
\STATE \textbf{Backward pass:}
\STATE {Given $\Sigma_T$, compute $a_T$, $b_T$, and $C_T$} using~\eqref{eq:aTbTCT_sddp}
\STATE {Compute $V_T$, $v_T$, and $S_T$ using~\eqref{eq:fh_coef_T}}
\FOR {$k = T-1$ to $1$}
\STATE { Given $\Sigma_{k}$, compute $a_Q$, $b_Q$ and $C_Q$ using~\eqref{eq:aQbQCQ_sddp}}.
\STATE {Compute the coefficients of $Q_k(x_k, u_k)$ using~\eqref{eq:fh_coef}}.
\STATE {Compute $d$ and $K$ using \eqref{eq:d_and_K}, and $v_k$ and $S_k$ using~\eqref{eq:value_coefficient_and_control_variation}.}
\ENDFOR
\STATE \textbf{Forward pass: }
\STATE {Start from $\hat{x}_1$.}
\FOR{$k = 1$ to $T-1$}
\STATE {$\hat{u}_k \gets u_k + \delta u_k$, where $\delta u_k$ is given by \eqref{eq:optimal_control_law_modified}.}
\STATE {$\hat{x}_{k+1} \gets f_k(\hat{x}_k, \hat{u}_k)$}
\ENDFOR
\STATE Repeat from Step 1 until convergence. 
\end{algorithmic}
\end{algorithm}

\section{Theoretical Results} \label{sec:theory}

In this section, we discuss the computational complexity of DDP and sigma-point FHDP methods, and prove the local convergence of our proposed method. 

\subsection{Computational complexity}

Let us assume that the dimension of the state $n$ dominates the dimension of the control $s$. When implemented in form \eqref{eq:coefficients_of_q}, the computational complexity of DDP in terms of function evaluations nominally depends on the complexity of evaluating the first-order and the second-order derivatives of the dynamics. Each of the first-order derivatives in~\eqref{eq:approx_q_func} require  $\mathcal{O}(n^2)$ operations. The second-order derivatives require $\mathcal{O}(n^3)$ operations. Therefore, the computational complexity of the DDP method per iteration is $\mathcal{O}(T n^3)$, where $T$ is the time horizon~\cite{liao1991convergence}. However, in some cases, we can decrease the complexity of DDP to $\mathcal{O}(T n^2)$ \cite{nganga2021accelerating} by using automatic differentiation directly on $Q_k$ as described in Section~\ref{sec:autodiff}. 

In sigma-point methods, the computational complexity in terms of number of function evaluations is equal to the number of sigma-points. For instance, in Gauss--Hermite quadrature rule with order $p$, the number of required sigma-points is $p^{m}$, where $m = n + s$. In this rule, the number of  evaluation points grows exponentially with the state and control dimensions. On the other hand, in the 5th order symmetric cubature rule \cite{McNamee+Stenger:1967} (i.e., the 5th order unscented transform), the number of required sigma-points is $2m^2 + 1$ and in the 7th order rule it is $
(4m^3 + 8m + 3)/3$ (see, e.g., \cite{kokkala2015sigma}). Therefore, the total computational complexity per iteration is $O(T m^2)$ when using the 5th order rule and $O(T m^3)$ when using the 7th order rule.

\subsection{Convergence analysis}
\label{sec:convergence}

In this section, we study the local convergence of the Fourier--Hermite dynamic programming method. It is already known that differential dynamic programming (DDP) converges quadratically to the unique minimizer $u^*_{1:T-1}$ in well-defined conditions~\cite{murray1984differential,liao1990proof,liao1991convergence}. These results can be summarized as follows.

\begin{lemma}[DDP convergence]
\label{lem:ddp_convergence}
Assume that $\ell_k$, $k=1, \ldots, T$ and $f_k$, $k=1, \ldots, T-1$ are three times continuously differentiable with respect to $x_k$ and $u_k$, and the second derivative of $Q_k$ with respect to $u_k$ is positive definite for all $k$. Furthermore, assume that the iterates $(x_{1:T}^{(i)},u_{1:T-1}^{(i)})$ produced by DDP are contained in a convex set $D$, which also contains the minimizer $(x_{1:T}^{*},u_{1:T-1}^{*})$. Then, the sequence of DDP iterates $u_{1:T-1}^{(i)}$ converges quadratically in the sense that where exist $c > 0$ such that
\begin{equation}
  \| u_k^{(i+1)} - u^*_k \| \le c \, \| u^{(i)} - u^*_k \|^2.
\end{equation}
\end{lemma}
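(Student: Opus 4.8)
The plan is to reduce the problem to an unconstrained minimization of the total cost $J$ in the control sequence $u_{1:T-1}$ alone---eliminating the states through the dynamics \eqref{eq:dynamics}, since $x_1$ is fixed and each $x_k$ is a function of $u_{1:k-1}$---and then to show that the DDP iteration is (asymptotically) Newton's method for this reduced problem. Once that equivalence is in place, the quadratic rate follows from the classical local convergence theory of Newton's method, provided we verify that its hypotheses hold under the stated assumptions.

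First I would write the first-order optimality condition $\nabla_u J(u^*_{1:T-1}) = 0$ and identify the gradient $g^{(i)} = \nabla_u J$ and Hessian $H^{(i)} = \nabla_u^2 J$ at the current iterate. Because the dynamics are Markovian, $H^{(i)}$ has a block-banded structure, so the Newton step $-[H^{(i)}]^{-1} g^{(i)}$ can be computed by a backward Riccati-type factorization followed by a forward substitution. The key step is then to show that the DDP backward pass---the recursion \eqref{eq:coefficients_of_q}, \eqref{eq:value_coefficient_and_control_variation} for $S_k$, $v_k$, together with $d_k$, $K_k$ from \eqref{eq:d_and_K}---is exactly this factorization: the propagated matrices $S_k$ reproduce the second derivative of the tail cost-to-go along the current trajectory, and the feedforward/feedback pair $(d_k, K_k)$ solves the corresponding linear system stage by stage. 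Consequently the increment produced by the forward pass \eqref{eq:optimal_control_law_modified}, in which the feedback $-K_k\,\delta x_k$ responds to the state perturbations induced by earlier control changes, reassembles the full Newton increment in $u_{1:T-1}$ up to terms of order $\|u^{(i)} - u^*\|^2$.

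With the Newton identification established, I would invoke the standard quadratic convergence estimate. This requires three ingredients, each supplied by the hypotheses: (i) invertibility and uniform positive definiteness of $H^{(i)}$, which follows from the assumption that $Q_{uu}$ is positive definite for all $k$ together with the Riccati recursion; (ii) Lipschitz continuity of $\nabla_u^2 J$, which follows from $\ell_k$ and $f_k$ being three times continuously differentiable on the convex set $D$, so that the third derivatives---and hence the variation of the Hessian---are bounded uniformly on $D$; and (iii) the iterates $(x_{1:T}^{(i)}, u_{1:T-1}^{(i)})$ remaining in $D$ together with $u^*$. Combining these yields a constant $c > 0$ with $\|u^{(i+1)} - u^*\| \le c\,\|u^{(i)} - u^*\|^2$.

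The main obstacle is the second step: the DDP update is not literally Newton's method on $u$, because the nonlinear forward pass and the state-feedback term $-K_k\,\delta x_k$ couple the stages in a way a naive Newton step does not. The delicate part is to show that this feedback, when composed with the exact nonlinear forward simulation, differs from the exact Newton increment only by higher-order terms, so that the quadratic rate is preserved. One must also check that the optional regularization $\beta$ and line-search parameter $\epsilon$ do not interfere asymptotically---near the minimizer full steps ($\epsilon = 1$) are accepted and $\beta$ is driven to zero, so that \eqref{eq:modified_value_coefficient_and_control_variation} reduces to \eqref{eq:value_coefficient_and_control_variation}---and that the iterates indeed remain in $D$, which is precisely where the uniform bounds on curvature and third derivatives apply.
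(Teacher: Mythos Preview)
The paper does not prove this lemma: it is stated as a summary of prior results with citations to \cite{murray1984differential,liao1990proof,liao1991convergence}, and no argument is given in the paper itself. Your sketch is a faithful outline of exactly the approach taken in those references---reducing to an unconstrained problem in $u_{1:T-1}$, identifying the DDP backward/forward passes with a stagewise Riccati factorization of the Newton step (up to $O(\|u^{(i)}-u^*\|^2)$ discrepancies from the nonlinear forward pass and the feedback coupling), and then invoking the standard Newton convergence theorem under $C^3$ smoothness and positive-definite $Q_{uu}$---so there is nothing to compare beyond noting that your proposal matches the cited literature rather than anything original to the paper.
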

Our aim is now to prove the convergence of the proposed Fourier--Hermite dynamic programming (FHDP) method by constructing a modified model such that when we apply DDP on it, it exactly reproduces the FHDP result. For that purpose, let us first introduce the following lemma.

\begin{lemma}[Relationship of Taylor and Fourier--Hermite series]
\label{lem:g_trans}
Let us consider a scalar function $g(y)$ and define the following Weierstrass type of transform:
\begin{equation}
  \bar{g}(y) = \int g(z) \, \mathcal{N}(z \mid y, \Sigma) \, dz.
\label{eq:g_transform}
\end{equation}
Let us also assume that $\bar{g}(y)$ is at least three times continuously differentiable, which can be ensured by, for example, $\int \|z\|^3 \, |g(z)| \, \mathcal{N}(z \mid y, \Sigma) \, dz < \infty$.
Then the Taylor series expansion of $\bar{g}(y')$ matches the Fourier--Hermite expansion of $g(y')$ with respect to $\mathcal{N}(z \mid y, \Sigma)$ up to an additive constant.
\end{lemma}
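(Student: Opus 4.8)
The plan is to compute the Taylor coefficients of $\bar{g}$ at the expansion centre $y$ directly from the integral definition \eqref{eq:g_transform} and to show that they coincide with the Fourier--Hermite coefficients $a_G$, $b_G$, $C_G$ of $g$ from \eqref{eq:coef_def}. The mechanism is that differentiating the Gaussian density $\mathcal{N}(z\mid y',\Sigma)$ with respect to its mean $y'$ reproduces exactly the Hermite polynomials appearing in \eqref{eq:coef_def}. Writing $\Sigma=\Lambda\,\Lambda^\top$, one has $\nabla_{y'}\,\mathcal{N}(z\mid y',\Sigma)=\Sigma^{-1}(z-y')\,\mathcal{N}(z\mid y',\Sigma)$ and $\nabla^2_{y'}\,\mathcal{N}(z\mid y',\Sigma)=\bigl[\Sigma^{-1}(z-y')(z-y')^\top\Sigma^{-1}-\Sigma^{-1}\bigr]\,\mathcal{N}(z\mid y',\Sigma)$, and using the tensor forms of \eqref{eq:tensors} these are precisely $\Lambda^{-\top}H_1(\Lambda^{-1}(z-y'))$ and $\Lambda^{-\top}H_2(\Lambda^{-1}(z-y'))\,\Lambda^{-1}$ multiplied by the density.

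First I would differentiate under the integral sign in \eqref{eq:g_transform} and evaluate at $y'=y$. The zeroth order term yields $\bar{g}(y)=\mathbb{E}[g(z)]=a_G$; the gradient yields $\nabla\bar{g}(y)=\Lambda^{-\top}\,\mathbb{E}[g(z)\,H_1(\Lambda^{-1}(z-y))]=\Lambda^{-\top}b_G$; and the Hessian yields $\nabla^2\bar{g}(y)=\Lambda^{-\top}\,\mathbb{E}[g(z)\,H_2(\Lambda^{-1}(z-y))]\,\Lambda^{-1}=\Lambda^{-\top}C_G\,\Lambda^{-1}=[\Lambda\,C_G^{-1}\,\Lambda^\top]^{-1}$. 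Hence the second-order Taylor polynomial of $\bar{g}$ about $y$ is $a_G+b_G^\top\Lambda^{-1}(y'-y)+\tfrac12(y'-y)^\top[\Lambda\,C_G^{-1}\,\Lambda^\top]^{-1}(y'-y)$.

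Then I would compare this term-by-term with the second-order Fourier--Hermite expansion of $g$ in \eqref{eq:fh_general_second_order_y3}, identifying the centre $\mu$ there with $y$ and the free variable with $y'$. The linear and quadratic terms are then \emph{identical}, so the only discrepancy is the constant: the Taylor constant is $a_G$, whereas the Fourier--Hermite constant is $a_G-\tfrac12\operatorname{tr}\{C_G\}$, the extra $-\tfrac12\operatorname{tr}\{C_G\}$ being exactly the contribution of the $-I$ in $H_2(x)=x\,x^\top-I$. This shows that the two quadratic models agree up to the additive constant $\tfrac12\operatorname{tr}\{C_G\}$, which is the claim.

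The main obstacle is analytic rather than algebraic: I must justify the interchange of differentiation and integration so that $\bar{g}$ is well-defined and twice continuously differentiable, which requires controlling the growth of $g$ against the Gaussian kernel (membership $g\in\mathcal{H}$ together with the smoothing property of the heat kernel is what legitimises the interchange). A secondary point is the multivariate bookkeeping of the Cholesky factor $\Lambda$ in the Hermite tensors, which must be tracked so that the Hessian emerges in precisely the form $[\Lambda\,C_G^{-1}\,\Lambda^\top]^{-1}$ used in \eqref{eq:fh_general_second_order_y3}. I would also flag explicitly that the ``additive constant'' conclusion is special to the quadratic truncation relevant to DDP and FHDP: beyond second order the Taylor monomials and the Hermite polynomials differ by lower-order polynomials rather than by constants, so the matching is exactly at the level of the quadratic models used in the backward pass.
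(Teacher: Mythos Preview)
Your proof is correct and takes a slightly different, arguably cleaner, route than the paper's. The paper first applies integration by parts (Stein's lemma) to rewrite the Fourier--Hermite coefficients as expectations of the derivatives of $g$, namely $\mathbb{E}[g(z)\,H_1(\cdot)]=\Lambda^\top\,\mathbb{E}[G_z(z)]$ and $\mathbb{E}[g(z)\,H_2(\cdot)]=\Lambda^\top\,\mathbb{E}[G_{zz}(z)]\,\Lambda$, and then computes the Taylor coefficients of $\bar g$ by the change of variables $z=y+\xi$ and differentiating $g$ under the integral; both sides of the comparison then meet at $\mathbb{E}[G_z]$ and $\mathbb{E}[G_{zz}]$. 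You instead differentiate the Gaussian kernel with respect to its mean, which is precisely the Rodrigues-type identity that generates the Hermite polynomials, so the Fourier--Hermite coefficients $a_G,b_G,C_G$ appear \emph{directly} as the Taylor coefficients of $\bar g$ without ever passing through derivatives of $g$. This buys you a weaker smoothness hypothesis on $g$ (membership in $\mathcal{H}$ plus heat-kernel smoothing, rather than $g\in C^2$), which you correctly flag. Your closing remark that the ``up to an additive constant'' conclusion is specific to the second-order truncation---because $H_n$ differs from the monomial basis by lower-order \emph{polynomials} for $n\ge 3$---is a useful sharpening of the lemma's scope that the paper leaves implicit.
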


\begin{proof}
When $z \sim \mathcal{N}(z \mid y,\Sigma)$, integration by parts gives
\begin{equation}
\begin{split}
\mathbb{E}[g(z) H_1(\Lambda^{-1}(z-y))] 
&= \Lambda^\top \, \mathbb{E}[G_z(z)], \\
\mathbb{E}[g(z) H_2(\Lambda^{-1}(z-y))]  
&= \Lambda^\top \, \mathbb{E}[G_{zz}(z)] \, \Lambda,
\end{split}
\end{equation}
where $\Sigma = \Lambda \, \Lambda^\top$. Substituting to \eqref{eq:fh_general_second_order_y2} then gives the following Fourier--Hermite series for $g(y)$ with respect to $\mathcal{N}(z \mid y,\Sigma)$:
\begin{equation}
\begin{split}
g(y') &\approx \mathbb{E}[g(z)] + \mathbb{E}[G_z(z)]^\top (y' - y) \\
&\quad + \frac{1}{2}  (y' - y)^\top \mathbb{E}[G_{zz}(z)] \, (y' - y) - \frac{1}{2} \mathop{\text{tr}}\left\{  \mathbb{E}[G_{zz}(z)] \, \Sigma \right\},
\end{split}
\label{eq:fh_version_of_g}
\end{equation}
where the expectations are over $\mathcal{N}(z \mid y,\Sigma)$.

For the Taylor series expansion of \eqref{eq:g_transform} we can change variables by $\int g(z) \, \mathcal{N}(z \mid y, \Sigma) \, dz = \int g(y + \xi) \, \mathcal{N}(\xi \mid 0, \Sigma) \, d\xi$, which after differentiation under integral and changing back to $z$ gives
\begin{equation}
\begin{split}
  \bar{G}_y(y) &= \int G_z(z) \, \mathcal{N}(z \mid y, \Sigma) \, dz, \\
  \bar{G}_{yy}(y) &= \int G_{zz}(z) \, \mathcal{N}(z \mid y, \Sigma) \, dz.
\end{split}
\end{equation}
In the notation of \eqref{eq:fh_version_of_g} we have $\bar{g}(y) = \mathbb{E}[g(z)]$, $
\bar{G}_y(y) = \mathbb{E}[G_z(z)]$, and $\bar{G}_{yy}(y) = \mathbb{E}[G_{zz}(z)]$, and hence the Taylor series expansion of $\bar{g}(y')$ becomes
\begin{equation}
\begin{split}
\bar{g}(y') &\approx \mathbb{E}[g(z)] + \mathbb{E}[G_z(z)]^\top (y' - y) \\
&\quad + \frac{1}{2}  (y' - y)^\top \mathbb{E}[G_{zz}(z)] \, (y' - y),
\end{split}
\label{eq:taylor_version_of_g}
\end{equation}
which is the same as \eqref{eq:fh_version_of_g} except for the last term which is constant in $y'$.
\end{proof}

\begin{lemma}[Equivalent DDP model]
\label{lem:equiv_ddp}
Consider a transformation of the problem \eqref{eq:total_cost} and \eqref{eq:dynamics}, where we replace the end-condition with
\begin{equation}
\begin{split}
  \bar{\ell}_T(x_T) &= \int \ell_T(x'_T) \, \mathcal{N}(x'_T \mid x_T, \Sigma_T) \, dx'_T
\end{split}
\label{eq:bar_ell_T}
\end{equation}
and the cost at time step $k$ by 
\begin{equation}
\begin{split}
  \bar{\ell}_k(x_k,u_k) &= \bar{Q}_k(x_k, u_k)  - V_{k+1}(f_k(x_{k}, u_k)),
\end{split}
\label{eq:bar_ell_k}
\end{equation}
where the transformed action-value function is defined by
\begin{equation}
\begin{split}
  \bar{Q}_k(x_k, u_k) 
  &= \iint \left[ \ell_k(x_k', u_k') + V_{k+1}(f_k(x_{k}', u_k'))
\right] \\
  &\qquad \times \mathcal{N}([x_k';u_k'] \mid [x_k,u_k], \Sigma_k) \, du'_k \, dx'_k.
\end{split}
\label{eq:bar_Q}
\end{equation}
Then the iteration produced by applying DDP to the modified model exactly matches the iterations generated by FHDP.
\end{lemma}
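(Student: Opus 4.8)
**

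The plan is to show that running DDP on the modified model (whose terminal cost and per-step costs are defined by the Weierstrass-type smoothing transforms in~\eqref{eq:bar_ell_T}--\eqref{eq:bar_Q}) produces, at every step of the backward and forward passes, exactly the same coefficients $Q_k^0$, $Q_x$, $Q_u$, $Q_{xx}$, $Q_{xu}$, $Q_{uu}$ (and hence the same $d$, $K$, $v_k$, $S_k$) that FHDP produces on the original model. Since the forward pass, the control-law update~\eqref{eq:optimal_control_law_modified}, and the value-coefficient recursion~\eqref{eq:value_coefficient_and_control_variation} are \emph{identical functions} of these coefficients in both algorithms, matching the coefficients step-by-step suffices to conclude that the two iterations coincide.

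First I would treat the terminal step. The FHDP terminal coefficients are given by~\eqref{eq:fh_coef_T}--\eqref{eq:aTbTCT_det} as Fourier--Hermite coefficients of $\ell_T$ with respect to $\mathcal{N}(\cdot\mid\hat{x}_T,\Sigma_T)$, whereas DDP on the modified model forms a second-order Taylor expansion of $\bar{\ell}_T$ at $\hat{x}_T$. By Lemma~\ref{lem:g_trans}, with $g=\ell_T$ and $\bar{g}=\bar{\ell}_T$, these two expansions agree up to an additive constant. The additive constant is harmless: it shifts $V_T^0$ but leaves $v_T$ and $S_T$ unchanged, and since $V_k^0$ never feeds back into the control law or into the gradient/Hessian recursion, it cannot affect the iterates $(x,u)$. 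So the terminal quadratic models match in the only parts that matter.

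Next I would do the inductive step. Assume the two algorithms carry the \emph{same} quadratic value-function model at step $k+1$, i.e.\ the same $V_{k+1}^0$ (up to a constant), $v_{k+1}$, $S_{k+1}$. The key observation is that FHDP forms its $Q_k$-coefficients via~\eqref{eq:fh_coef} as Fourier--Hermite coefficients of $Q_k(x_k,u_k)=\ell_k+V_{k+1}\circ f_k$ with respect to $\mathcal{N}(\cdot\mid[\hat{x}_k,\hat{u}_k],\Sigma_k)$, while DDP on the modified model Taylor-expands $\bar{\ell}_k+V_{k+1}\circ f_k$. But by construction~\eqref{eq:bar_ell_k}--\eqref{eq:bar_Q}, $\bar{\ell}_k+V_{k+1}\circ f_k=\bar{Q}_k$, which is precisely the Weierstrass transform of the integrand $Q_k$; so Lemma~\ref{lem:g_trans} applied with $g=Q_k$ again equates the Taylor expansion of $\bar{Q}_k$ with the Fourier--Hermite expansion of $Q_k$, up to an additive constant. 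Hence all the matching coefficients in~\eqref{eq:fh_coef} coincide, the constant again only perturbs $Q_k^0$ and thence $V_k^0$, and the induction propagates the invariant to step $k$.

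The main obstacle, and the place requiring the most care, is the dependence of the smoothing operation on $V_{k+1}$ inside $\bar{Q}_k$: the modified cost $\bar{\ell}_k$ in~\eqref{eq:bar_ell_k} subtracts back the \emph{exact} $V_{k+1}\circ f_k$ after smoothing the \emph{whole} integrand, so the object DDP actually expands is $\bar{\ell}_k+V_{k+1}\circ f_k=\bar{Q}_k$ rather than a naively smoothed $\ell_k$ plus an unsmoothed value term. I would make explicit that this is exactly the combination Lemma~\ref{lem:g_trans} requires, and verify that the quadratic approximation to $V_{k+1}$ used on both sides is literally the same function (this is where the inductive hypothesis is consumed), so that the integrand $Q_k$ in the Fourier--Hermite expectations~\eqref{eq:aQbQCQ_det} and the integrand in the transform~\eqref{eq:bar_Q} are identical. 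A secondary point to state cleanly is that the additive constants accumulated at each step collect only in the $V_k^0$/$Q_k^0$ terms and are provably inert in the control update and the $(v_k,S_k)$ recursion; once that is established, invoking Lemma~\ref{lem:ddp_convergence} on the modified model transfers quadratic convergence to FHDP.
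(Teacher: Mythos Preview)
Your proposal is correct and follows essentially the same route as the paper: reduce DDP's action-value function on the modified model to $\bar{Q}_k$ via \eqref{eq:bar_ell_k}, invoke Lemma~\ref{lem:g_trans} to match the Taylor expansion of $\bar{Q}_k$ with the Fourier--Hermite expansion of $Q_k$ up to an additive constant, and observe that this constant does not affect the control law or the $(v_k,S_k)$ recursion, so the forward passes coincide. Your version is more explicit (terminal step, induction, inert-constant bookkeeping) than the paper's terse argument, and your final remark about invoking Lemma~\ref{lem:ddp_convergence} properly belongs to the proof of Theorem~\ref{the:fhdp} rather than this lemma.
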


\begin{proof}
Substituting the modified cost function \eqref{eq:bar_ell_k} to \eqref{eq:q_function} reduces the action value function to \eqref{eq:bar_Q}. Then by Lemma~\ref{lem:g_trans}, the second-order Taylor series expansions of the action-value function taken around $\hat{x}_T$ and $\hat{x}_k,\hat{u}_k$ match the Fourier--Hermite series expansions up to a constant which only depends on the nominal trajectory. The maxima of the Taylor-series based action-value functions with respect to the $u_k$ also match the maxima obtained from the Fourier--Hermite series expansions (up to the constant). Therefore the control laws are the same and as the forward passes are the same, the iterations produce exactly the same result.
\end{proof}

\begin{theorem}[Quadratic convergence of FHDP] \label{the:fhdp}
Assume that $f_k$ is three times continuously differentiable and the transformed problem defined in Lemma~\ref{lem:equiv_ddp} has a unique solution $(\bar{x}^*_{1:T},\bar{u}^*_{1:T-1})$ within a convex set $D$. Further assume that the second derivatives of the transformed action-value function in \eqref{eq:bar_Q} with respect to $u_k$ are positive definite and all the iterates produced by FHDP $(\bar{x}^{(i)}_{1:T},\bar{u}^{(i)}_{1:T-1}) \in D$. Then the sequence of iterates $\bar{u}_{1:T-1}^{(i)}$ produced by FHDP converges quadratically to the solution in the sense that where exist $\bar{c} > 0$ such that
\begin{equation}
  \| \bar{u}_k^{(i+1)} - \bar{u}^*_k \| \le \bar{c} \, \| \bar{u}^{(i)} - \bar{u}^*_k \|^2.
\end{equation}
\end{theorem}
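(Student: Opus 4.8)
The plan is to reduce the claim to the already-established DDP convergence result in Lemma~\ref{lem:ddp_convergence} by exploiting the exact equivalence of iterates proved in Lemma~\ref{lem:equiv_ddp}. Since FHDP applied to the original problem produces exactly the same state-control iterates as DDP applied to the modified model of Lemma~\ref{lem:equiv_ddp}, it suffices to check that the modified model satisfies every hypothesis of Lemma~\ref{lem:ddp_convergence}; the quadratic bound for $\bar{u}_k^{(i)}$ then follows verbatim, with $\bar{c}$ inherited as the DDP constant $c$.

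First I would identify the action-value function of the modified model. Substituting the modified stage cost \eqref{eq:bar_ell_k} into the action-value definition \eqref{eq:q_function} and using that the value function of the modified model coincides with $V_{k+1}$, the term $-V_{k+1}(f_k(x_k,u_k))$ in $\bar{\ell}_k$ cancels the continuation term $+V_{k+1}(f_k(x_k,u_k))$, so the action-value function of the modified model is exactly $\bar{Q}_k$ in \eqref{eq:bar_Q}. This is the crucial structural observation: although $\bar{\ell}_k$ separately involves the possibly non-smooth $V_{k+1}$, the quantity that actually enters the DDP backward recursion is $\bar{Q}_k$.

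Second I would establish the smoothness required by Lemma~\ref{lem:ddp_convergence}. Because $\bar{Q}_k$ in \eqref{eq:bar_Q} and $\bar{\ell}_T$ in \eqref{eq:bar_ell_T} are Gaussian convolutions (Weierstrass transforms) of $Q_k$ and $\ell_T$, differentiating under the integral sign, justified by the rapid decay of the Gaussian kernel and its derivatives under the standing integrability assumptions, shows that both are infinitely differentiable, hence in particular three times continuously differentiable in $(x_k,u_k)$. Together with the hypothesis that $f_k$ is three times continuously differentiable, this supplies the differentiability premise of Lemma~\ref{lem:ddp_convergence} for the modified model. The positive-definiteness of the second derivative of $\bar{Q}_k$ with respect to $u_k$ and the existence of a unique minimizer $(\bar{x}^*_{1:T},\bar{u}^*_{1:T-1})$ within the convex set $D$ containing all iterates are assumed directly in the theorem, so the remaining premises transfer immediately.

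Finally I would invoke Lemma~\ref{lem:ddp_convergence} on the modified model to obtain $\|\bar{u}_k^{(i+1)} - \bar{u}^*_k\| \le c\,\|\bar{u}^{(i)} - \bar{u}^*_k\|^2$, set $\bar{c} = c$, and transport the bound back to FHDP via the iterate equivalence of Lemma~\ref{lem:equiv_ddp}. The main obstacle I anticipate is the second step: verifying rigorously that the differentiability hypothesis of the cited DDP convergence analysis is genuinely a condition on the action-value function $\bar{Q}_k$ (which is smooth here) rather than on the individual stage cost $\bar{\ell}_k$ (which inherits the roughness of $V_{k+1}$). I would need to trace the proofs in \cite{murray1984differential,liao1990proof,liao1991convergence} to confirm that only the smoothness of $\bar{Q}_k$ and $f_k$, and not of $\bar{\ell}_k$ in isolation, is used in deriving the quadratic contraction, so that the cancellation observed in the first step is enough to close the argument.
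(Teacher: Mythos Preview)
Your proposal is correct and follows essentially the same route as the paper: invoke Lemma~\ref{lem:equiv_ddp} to identify FHDP with DDP on the transformed model, verify that the hypotheses of Lemma~\ref{lem:ddp_convergence} hold for that model, and read off the quadratic bound with $\bar{c}=c$. The concern you flag in your final paragraph---whether the DDP convergence analysis genuinely requires only smoothness of $\bar{Q}_k$ and $f_k$ rather than of $\bar{\ell}_k$ separately---is exactly the caveat the paper raises in the remark immediately following the theorem, so you have correctly located the one soft spot in the argument.
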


\begin{proof}
By Lemma~\ref{lem:equiv_ddp}, FHDP can be seen as DDP which finds the optimum of a transformed problem defined by \eqref{eq:bar_ell_T}, \eqref{eq:bar_ell_k}, and \eqref{eq:bar_Q}. The assumptions ensure that assumptions for Lemma~\ref{lem:ddp_convergence} are satisfied which leads to the result.
\end{proof}

\begin{remark}
  Because the transformed model reduces to the original model when $\Sigma_T,\Sigma_k \to 0$, in this limit, the results of FHDP and DDP match.
\end{remark}

\begin{remark}
In Theorem~\ref{the:fhdp} we had to assume that the dynamics are three times differentiable to adapt the existing DDP convergence results to the current setting. However, as the Fourier--Hermite expansion is always formed for $Q_k$, the convergence result should apply even in the case that the dynamics are not three times differentiable as long as the transformed $\bar{Q}_k$ is smooth, which it in very general conditions is.
\end{remark}

\section{Experiments} 
\label{sec:experiments}
To demonstrate the performance of our proposed method, we consider the classical pendulum and cart-pole models and compare to the classical DDP and the unscented DP (UDP) in~\cite{manchester2016derivative}. All the algorithms were implemented in MATLAB\textsuperscript{\textregistered} 2021b using its automatic differentiation functionality. All experiments were carried on a CPU using AMD EPYC\textsuperscript{\texttrademark} 7643 with  48 Cores and 2.3GHz.\footnote{The source code is available at \url{https://github.com/EEA-sensors/FourierHermiteDynamicProgramming.git}}

\subsection{Pendulum swinging experiment}
\label{sec:pendulum}

First, we consider a pendulum swing-up problem, which was also used in \cite{manchester2016derivative}. The goal is to swing the pendulum from downward position ($\theta = 0$) to upward ($\theta = \pi$) position by using an input torque,$u$, as control. We define the state of the pendulum as $x = [\theta, \dot{\theta}]^\top$, and use a quadratic cost function of the form
\begin{equation}
    \begin{split}
        J &= \frac{1}{2} (x_T - x_{g})^\top {W}_T (x_T - x_{g}) \\
        &\quad + \sum_{k=1}^{T-1} \left\{ \frac{1}{2} (x_k - x_{g})^\top {W}  (x_k - x_{g}) + u_k^\top R u_k \right\}.
    \end{split}
    \label{eq:cost_func}
\end{equation}
The parameters of the pendulum model and the cost function are same as in~\cite{manchester2016derivative}. We discretize the dynamics using a fourth-order Runge--Kutta method with a zero-order hold on $u$. The step size is set to $0.1$ and $T=50$. We set the initial and final states to be $x_1 = [0, 0]^\top$ and $x_{g} = [\pi, 0]^\top$, respectively. 
\begin{figure}[tbh]
     \centering
     \begin{subfigure}[t]{0.49\linewidth}
         \centering
         \includegraphics[width=\linewidth]{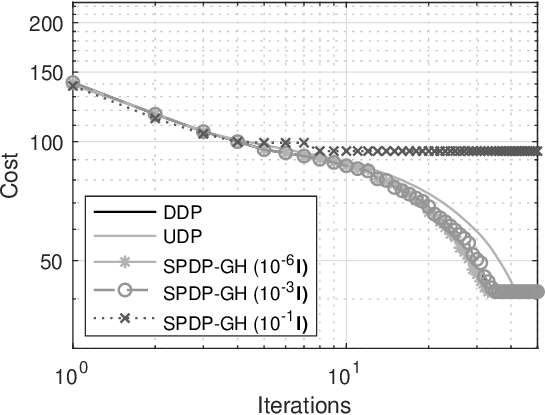}
         \caption{}
         \label{fig:cost_cov_pendulum}
     \end{subfigure}
     \hfill
     \begin{subfigure}[t]{0.49\linewidth}
         \centering
         \includegraphics[width=\linewidth]{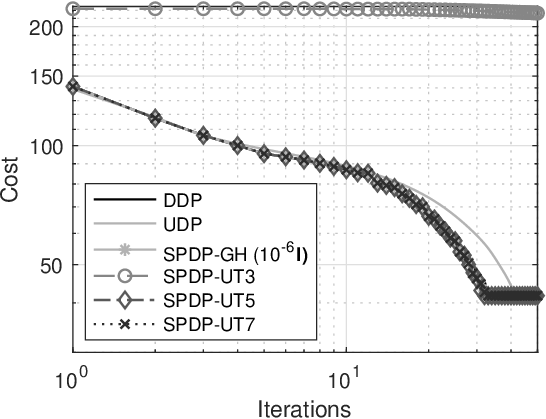}
         \caption{}
         \label{fig:cost_gh_ut_spdp_pendulum}
     \end{subfigure}
     \caption{Reduction in cost for pendulum swing-up problem by using DDP, unscented DP (UDP), and sigma-point based FHDP (SPDP). Results of SPDP in (a) are with Gauss--Hermite rule with $p=3$ (SPDP-GH) and different values of $\Sigma_T$ and $\Sigma_k$. In (b), the results of SPDP are with different integration rules: SPDP-GH is with the Gauss--Hermite ($p=3$), $3^{rd}$ (SPDP-UT3), $5^{th}$ (SPDP-UT5), and $7^{th}$ (SPDP-UT7) order cubature/unscented rules.}
     \label{fig:performance_pendulum}
\end{figure}

Figs.~\ref{fig:cost_cov_pendulum} and \ref{fig:cost_gh_ut_spdp_pendulum} show the the total cost of the trajectory as a function of the iteration number with DDP, UDP, and sigma-point based FHDP (SPDP) methods. As the aim is to compare the performance of DDP, UDP, and different variations of SPDP methods, in Fig.~\ref{fig:cost_cov_pendulum}, we use SPDP with Gauss--Hermite quadrature rule of order $p = 3$ (we call this SPDP-GH) and set $\Sigma_k \in \{10^{-6} I, 10^{-3} I, 10^{-1} I \}$, where $I$ is the identity matrix and $\Sigma_T$ similarly. As can be seen from the figure, all the compared methods except for one SPDP converge to a very similar total cost. In the first few iterations, all methods have approximately similar total cost. In later iterations, however, SPDP-GH with the large covariance, say $\Sigma_T = 10^{-1}I$ and $\Sigma_k = 10^{-1}I$, is slower to reduce the cost and hence requires more iterations to converge. This is expected because a large value for the covariance corresponds to FH expansion which averages the function over a large area around the nominal point. On the other hand, with a small covariance, say $\Sigma_T = 10^{-1}I$, and $\Sigma_k = 10^{-1}I$, the SPDP method coincides with DDP, which confirms the theoretical analysis of the method in Section~\ref{sec:convergence}. It can be seen that in this experiment both DDP and SPDP have better convergence speed than UDP. SPDP-GH with $\Sigma_T = 10^{-1}I$, and $\Sigma_k = 10^{-1}I$ has a slightly better cost reduction compared to DDP (see SPDP-GH ($10^{-6}I$) curve after $30$ iterations).

Fig.~\ref{fig:cost_gh_ut_spdp_pendulum} shows the performance of SPDP method with different sigma-point schemes. The schemes are Gauss--Hermite quadrature rule with $p=3$, $\Sigma_T = 10^{-1}I$, and $\Sigma_k = 10^{-1}I$ (SPDP-GH), 3rd (SPDP-UT3), 5th (SPDP-UT5), and 7th order (SPDP-UT7) unscented transforms, that is, spherical cubature rules~\cite{kokkala2015sigma}. We can see that the 3rd order cubature/unscented rule is not sufficient for computing the integrals for Fourier--Hermite coefficients as discussed in Section~\ref{sec:sigma-points} (see the curve of SPDP-UT3 in Fig.~\ref{fig:cost_gh_ut_spdp_pendulum}). The SPDP-GH, SPDP-UT5, and SPDP-UT7 methods converge with approximately similar number of iterations as DDP, and the performance is practically independent of the integration rule used.

\begin{table}[tbh]
\caption{Average run times and the number of sigma points of DDP, UDP, and different variations of SPDP methods in pendulum swing-up problem. Here, $m_T$, $m_k$ denote the number of sigma-points in SPDP methods at terminal and $k^{th}$ step.}
\setlength{\tabcolsep}{3pt}
\centering
\begin{tabular}{|p{90pt}|c|c|}
\hline
Method &
Average run times (s) & \# of sigma points \\
\hline
 DDP & 2.3775 & -\\ 
 UDP & 0.2576 & 6\\  
 SPDP-GH ($p=3$) & 0.0113 & $m_T = 9$, $m_k = 27$\\
 SPDP-UT5 & 0.0071 & $m_T = 9$, $m_k = 19$\\
 SPDP-UT7 & 0.0140 & $m_T = 17$, $m_k = 45$ \\
\hline
\end{tabular}
\label{tab:pendulum_avg_time}
\end{table}

Table~\ref{tab:pendulum_avg_time} lists the average run times (in s) to compute backward and forward passes per iteration. As we can see, DDP requires more computational time due to computing the derivatives appearing in~\eqref{eq:coefficients_of_q}. UDP requires less time since the method avoids computing derivatives of $f_k$. However, it requires computing the derivatives of $l_k$ and backward propagation of sigma points. The computational speed of SPDP mainly depends on the number of sigma-points used in the integration rule. We also list the number of sigma-points that need to be evaluated for UDP and SPDP methods in Table~\ref{tab:pendulum_avg_time}. The number of sigma points at terminal step T and at step k are denoted as $m_T$ and $m_k$, for $k = 1, \ldots, T-1$. It is clear from this table that SPDP is faster than the other methods. The run times for SPDP-UT5 is the fastest among all the methods,  because the number of evaluation points in SPDP-UT5 is the least of the SPDP methods. 

\subsection{Cart-pole experiment}
\label{sec:cartpole_experiment}
In this experiment, we consider a cart-pole balancing problem, where the aim is to balance the pole in upward position by applying an external force $u$ to move the cart in the horizontal direction. The similar experiment was also performed in~\cite{manchester2016derivative}. The cart with mass $m_c$ is attached to a pole with mass $m_p$ and length $l$. We denote the state of this system as $x = [v, \theta, \dot{v}, \dot{\theta}]^\top$, where $v$ and $\dot{v}$ are the position and the velocity of the cart, respectively, and $\theta$ and $\dot{\theta}$ denote the angle and angular speed of the pole, respectively. We set the initial and final states to be $x_1 = [0, 0, 0, 0]^\top$ and $x_{g} = [0, \pi, 0, 0]^\top$, respectively. The differential equations of this cart-pole system can be found in~\cite{barto1983neuronlike}. We discretize the dynamics using fourth-order Runge--Kutta integration and zero-order hold for the control $u$.

We use a cost function of similar form as \eqref{eq:cost_func} and set the values $m_p$, $m_c$, $l$, $g$, $W_T$, $W$, and $R$ to be the same as in~\cite{manchester2016derivative}. The step size is set to be $0.1$ and $T = 50$. 

\begin{figure}[tbh]
     \centering
     \begin{subfigure}[t]{0.49\linewidth}
         \centering
         \includegraphics[width=\linewidth]{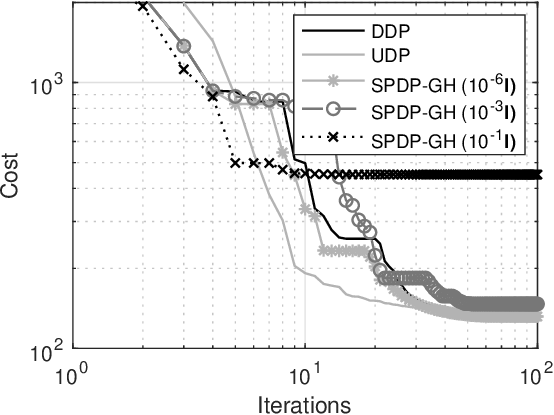}
         \caption{}
         \label{fig:cost_cov_cartpole}
     \end{subfigure}
     \hfill
     \begin{subfigure}[t]{0.49\linewidth}
         \centering
         \includegraphics[width=\linewidth]{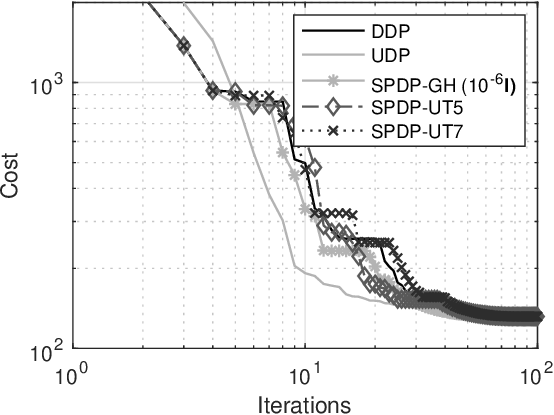}
         \caption{}
         \label{fig:cost_gh_ut_spdp_cartpole}
     \end{subfigure}
     \caption{Reduction in cost for cart-pole balancing problem by using DDP, unscented DP (UDP) and sigma-point based FHDP (SPDP). Results in (a) are with SPDP with Gauss--Hermite rule of order $p=3$ (SPDP-GH) and different values of $\Sigma_T$ and $\Sigma_k$, and in (b) for SPDP with different integration rules and $10^{-6}I$ covariance: GH with $p=3$ (SPDP-GH),  $5^{th}$ (SPDP-UT5), and $7^{th}$ (SPDP-UT7) order cubature/unscented rules.}
     \label{fig:performance_cartpole}
\end{figure}

Similar to pendulum example, we investigate the performance of the methods in reducing total cost. The results of SPDP method with different covariances and different integration schemes are shown in Figs.~\ref{fig:cost_cov_cartpole} and~\ref{fig:cost_gh_ut_spdp_cartpole}. In this case the covariance of SPDP affects the behavior more than in the pendulum experiment. For the first few iterations in Fig.~\ref{fig:cost_cov_cartpole}, all methods have a fast cost reduction, fastest being SPDP with $10^{-1} I$ covariance. For the later iterations, the methods have different speeds of cost reduction. With larger covariances (see SPDP-GH ($10^{-1}I$) and SPDP-GH ($10^{-3}I$) in Fig.~\ref{fig:cost_cov_cartpole}), SPDP method is slower in reducing the total cost and does not reach convergence within the 100 iterations shown in the figure. With smaller covariances, SPDP has similar behavior as DDP. What is interesting in this figure is that SPDP-GH ($10^{-6} I$) has better cost reduction compared to DDP during intermediate iterations. The SPDP-GH ($10^{-1} I$) has the fastest cost reduction until the first $6$ iterations. After that, there is no improvement. We also observed that the convergence of UDP method was the fastest among all the methods. In Fig.~\ref{fig:cost_gh_ut_spdp_cartpole} we can see that the integration method has a slight effect on the performance, but the results of SPDPs with different integration rules are practically the same.

\begin{table}[tbh]
\caption{Average run times and the number of sigma points of DDP, UDP, and different variations of SPDP methods in cart-pole balancing problem. Here, $m_T$, $m_k$ denote the number of sigma points in SPDP methods at terminal and $k^{th}$ step.}
\setlength{\tabcolsep}{3pt}
\centering
\begin{tabular}{|p{85pt}|c|c|}
\hline
Method &
Average run times (s) & \# of sigma points\\ 
\hline
 DDP & 34.8622 & - \\ 
 UDP & 0.3222 & 10\\  
 SPDP-GH ($p=3$) & 0.0999 & $m_T = 81$, $m_k = 243$ \\
 SPDP-UT5 &  0.0239 & $m_T = 33$, $m_k = 51$ \\
 SPDP-UT7 & 0.0758 & $m_T = 97$, $m_k = 181$ \\
\hline
\end{tabular}
\label{tab:cartpole_avg_time}
\end{table}

The average run times per iteration to compute the backward pass and forward passes are listed in Table~\ref{tab:cartpole_avg_time}. As in the pendulum case, the SPDP methods are faster than the other methods, UT5-based method being fastest of them. However, the margin to the UDP method is now smaller as UDP requires a relatively smaller number of sigma points than SPDP methods.

\subsection{Quadcopter experiment}
\label{sec:quadcopter}
Finally, we consider a multi-rotor unmanned quadcopter, which has 4 rotors with 6 degrees of freedom \cite{ahmed2018sliding}. The state of the system contains the 3D coordinates, velocities, the orientation (roll, pitch, yaw), and the angular velocities. There are 4 control inputs consisting of the total thrust produced by 4 rotors and the input torques. The cost function is similar to \eqref{eq:cost_func} and 4-th order Runge--Kutta method is used for discretization. Table~\ref{tab:quadcopter_avg_time} shows the results with different methods. DDP method is the slowest among all while SPDP-UT5 and SPDP-UT7 showed competitive results. The SPDP-GH method is not feasible since the number of evaluation points increases exponentially as the number of dimensions increases.

\begin{table}[tbh]
\caption{Average run times and the number of sigma points of DDP, and different variations of SPDP methods in quadcopter problem. Here, $m_T$, $m_k$ denote the number of sigma points in SPDP methods at terminal and $k^{th}$ step.}
\setlength{\tabcolsep}{3pt}
\centering
\begin{tabular}{|p{65pt}|c|c|}
\hline
Method &
Average run times (s) & \# of sigma points\\ 
\hline
 DDP & 480.8027 & - \\ 
 SPDP-UT5 &  0.1520 & $m_T = 289$, $m_k = 513$ \\
 SPDP-UT7 & 1.5834 & $m_T = 2,337$, $m_k = 5,505$ \\
\hline
\end{tabular}
\label{tab:quadcopter_avg_time}
\end{table}

\section{Conclusion and Discussion}
\label{sec:conclusion}
In this paper, we have proposed a Fourier--Hermite series based Fourier--Hermite dynamic programming (FHDP) algorithm and its derivative-free implementation sigma-point dynamic programming (SPDP) that approximates the action-value function using Fourier--Hermite series and sigma points. This is in contrast to classical differential dynamic programming (DDP) which is based on the use of Taylor series expansion. This new SPDP has the performance close or better than DDP algorithm and while it is computationally faster as the high order derivatives do not need to be evaluated. As shown by the experiments, it also can outperform another sigma-point based DP method, unscented dynamic programming (UDP), and it is also computationally faster in the tested experiments. We have also proved the local second order convergence of the proposed method.

Although in the second experiment, UDP outperformed the proposed method, in Fig.~\ref{fig:cost_cov_cartpole}, we see that SPDP-GH with different $\Sigma_T, \Sigma_k$ produces different convergence behavior. Moreover, we notice a larger cost reduction in SPDP-GH $(10^{-1}I)$ than UDP until $6^{th}$ iteration. This indicates that the covariance schedule of SPDP could be used to further improve the convergence speed of the method, and this is also confirmed by additional numerical experiments that we have done. However, we leave the further investigation of the covariance schedule as a future work.

\appendices


\bibliographystyle{IEEEtran}
\bibliography{ocbib}

\end{document}